\documentclass[11pt,dvips,twoside,letterpaper]{article}
\usepackage{pslatex}
\usepackage{fancyhdr}
\usepackage{graphicx}
\usepackage{geometry}
 \RequirePackage[T1]{fontenc}

\def\figurename{Figure} 
\makeatletter
\renewcommand{\fnum@figure}[1]{\figurename~\thefigure.}
\makeatother

\def\tablename{Table} 
\makeatletter
\renewcommand{\fnum@table}[1]{\tablename~\thetable.}
\makeatother

\usepackage{amsmath}
\usepackage{amssymb}
\usepackage{amsfonts}
\usepackage{amsthm,amscd}

\newtheorem{theorem}{Theorem}[section]
\newtheorem{lemma}[theorem]{Lemma}

\theoremstyle{definition}
\newtheorem{definition}[theorem]{Definition}

\theoremstyle{remark}
\newtheorem{remark}[theorem]{Remark}

\numberwithin{equation}{section}

\def\P{\mathbb P}

\def\R{\mathbb R}
\def\E{\mathbb E}

\def\Q{\mathbb Q}

\def\E{\mathbb E}


\setlength{\topmargin}{-0.35in}
\setlength{\textheight}{8.5in}   
\setlength{\textwidth}{5.5in}    
\setlength{\oddsidemargin}{0.5in}
\setlength{\evensidemargin}{0.5in} \setlength{\headheight}{26pt}
\setlength{\headsep}{8pt}
\thispagestyle{empty} \setcounter{page}{1}
\begin{document}
\title{{Reflected backward doubly stochastic differential
equations with discontinuous generator}}
\author{Auguste Aman\thanks{augusteaman5@yahoo.fr, corresponding author}\, and Jean Marc Owo \thanks{owo$_{-}$jm@yahoo.fr}\\
{\it U.F.R Mathématiques et informatique, Universit\'{e} de Cocody},\\ {\it 582 Abidjan 22, C\^{o}te d'Ivoire}}

\date{}
\maketitle

\begin{abstract}
In this note, we study one-dimensional reflected backward doubly
stochastic differential equations (RBDSDEs) with one continuous barrier and discontinuous generator (left-or right-continuous).
By a comparison theorem establish here for RBDSDEs, we provide a minimal or a maximal solution to RBDSDEs.
\end{abstract}

\noindent {\bf AMS Subject Classification:} 60H15; 60H20

\vspace{.08in} \noindent \textbf{Keywords}: Reflected backward doubly stochastic differential equation, discontinuous generator, comparison theorem.

\section{Introduction}
Backward doubly stochastic differential equations (BDSDEs for
short) are equations with two different directions of stochastic
integrals, i.e., the equations involve both a standard (forward)
stochastic integral $dW_{t}$ and a backward stochastic integral
$\overleftarrow{dB}_{t}$: for $t\in[0,T],$
\begin{eqnarray}\label{eq0}
Y_t=\xi+\int_{t}^{T}f(s,Y_s,Z_s)ds+
\int_{t}^{T}g(s,Y_s,Z_s)\overleftarrow{dB}_{s}-\int_{t}^{T}Z_sdW_{s},
\end{eqnarray}
where $\xi$ is a random variable termed the terminal condition, $f:\Omega \times [0,T]\times \mathbb{R}^k \times
\mathbb{R}^{d}\rightarrow \mathbb{R}$, $g:\Omega \times
[0,T]\times \mathbb{R}^k \times
\mathbb{R}^{d}\rightarrow \mathbb{R}^{l}$ are two jointly measurable processes, $W$ and $B$ are two mutually
independent standard Brownian motion, with values, respectively in
$\mathbb{R}^{d}$ and $\mathbb{R}^{l}$.
This kind of equations has been introduced by Pardoux and Peng
\cite{PardPeng} in $1994$. A solution of that equation is a couple of jointly measurable processes
$(Y,Z)$ with values in $\mathbb{R}^k \times
\mathbb{R}^{d}$ which mainly satisfies Eq. \eqref{eq0}. The authors have proved an existence and unique solution when $f$ and $g$ are uniform Lipschitz. They also
showed that BDSDEs can produce a probabilistic representation for solutions to some quasi-linear stochastic partial differential
equations. Unfortunately, the uniform Lipschitz condition cannot be satisfied in many applications.
Many authors have attempted to relax this condition on the coefficients $f$ and $g$. For instance, 
Shi et al. \cite{Shal} weakened the uniform Lipschitz assumptions to linear growth and continuous conditions by virtue of the comparison theorem introduced by themselves. They obtain the existence of solutions to Eq. \eqref{eq0} but without uniqueness. Recently, N'zi and Owo \cite{NO} have proved an existence solution to Eq. \eqref{eq0} when $f$ is discontinuous in $y$ and continuous in $z$.

In this note, we study the now well-know reflected backward doubly stochastic differential equations (RBDSDEs for
short):
\begin{eqnarray}
Y_t=\xi+\int_{t}^{T}f(s,Y_s,Z_s)ds+\int_{t}^{T}g(s,Y_s,Z_s)\overleftarrow{dB}_{s}+K_T-K_t-\int_{t}^{T}Z_sdW_{s},\;\; 0\leq t\leq T\label{1}.
\end{eqnarray}
We establish a comparison theorem for this kind of BDSDEs which help us to derive a maximal and a minimal solution when the generator $f$ is discontinuous. Our work is based to a recent paper of Bahlali et al. \cite{Bah}. They have proved that Eq. \eqref{1} has almost one solution and also a maximal and a minimal solution when the generator $f$ is continuous in $y$ and $z$.

The paper is organized as follows. In section 2, we give some notations, definitions and assumptions. Section 3 deals with our main results.

\section{ Notations, definitions and assumptions}
\setcounter{theorem}{0} \setcounter{equation}{0}
Let $(\Omega, \mathcal{F},\P)$ be a probability space in
which are defined all the processes considered in the sequel. The Euclidean norm of a vector $x \in
\mathbb{R}^{k}, (k\geq 2)$ will be denoted by $\|x\|$. \\For each $t \in
[0,T]$, we define
$\mathcal{F}_{t}\overset{\Delta}{=}\mathcal{F}_{t}^{W} \vee \mathcal{F}_{t,T}^{B},$
where for any process $\{\eta_{t}; t\in[0,T]\}$ and any $0\leq
s\leq t\leq T$; $\mathcal{F}_{s,t}^{\eta}=\sigma
\{\eta_{r}-\eta_{s}; s\leq r \leq t \} \vee \mathcal{N}$,
$\mathcal{F}_{t}^{\eta}=\mathcal{F}_{0,t}^{\eta}$; $\mathcal{N}$ denote
the class of $\P$-null sets of $\mathcal{F}$.
\\
Note that $\{\mathcal{F}_{0,t}^W, t\in [0,T]\}$ is an increasing
filtration and $\{\mathcal{F}_{t,T}^B, t\in [0,T]\}$ is a
decreasing filtration, and the collection $\{\mathcal{F}_{t}, t\in
[0,T]\}$ is neither increasing nor decreasing so it does not
constitute a filtration.

For any $n \in \mathbb{N}$, let
$\mathcal{M}^{2}(0,T,\mathbb{R}^{n})$ denote the set of ( class of
$d\P\otimes dt$ a.e. equal) $n$-dimensional jointly measurable
random processes  $\{\varphi_{t}; 0\leq t\leq T \}$  which
satisfy:
\begin{enumerate}
\item[(i)\ \ ] $\|\varphi \|_{\mathcal{M}^{2}}^{2}=\mathbb{E}(\int_{0}^{T}\mid \varphi_{t}
\mid^{2} dt)< \infty$

\item[(ii)\ \ ] $\varphi_{t}$ is $\mathcal{F}_{t}$-measurable, for
a.e. $t \in [0,T].$
\end{enumerate}
We denote by  $\mathcal{S}^{2}([0,T],\mathbb{R}^{n})$
the set of continuous $n$-dimensional random processes which
satisfy:

\begin{enumerate}
\item[(i)\ \ ] $\|\varphi \|_{\mathcal{S}^{2}}^{2}=\mathbb{E}(\underset{0\leq t\leq T}
{\sup} \mid \varphi_{t}\mid^{2})< \infty$

\item[(ii)\ \ ] $\varphi_{t}$ is $\mathcal{F}_{t}$-measurable, for any
$t \in [0,T].$
\end{enumerate}
\medskip
\begin{definition}
A solution of a RBDSDE is a triple of processes $(Y,Z, K)$ 
which satisfies Eq. \eqref{1} and such that:
\begin{enumerate}
\item[(i)\  ] $(Y,Z, K) \in
\mathcal{S}^{2}([0,T],\mathbb{R})\times
\mathcal{M}^{2}(0,T,\mathbb{R}^{ d})\times\mathrm{L}^{2}(\Omega,
\mathbb{P}, \mathbb{R}_+), $

\item[(ii)\ ] $Y_t\geq S_t$, for any
$t \in [0,T],$

\item[(iii) ] $K_t$ is continuous and increasing process with $K_0=0$ and  $\int_0^T(Y_t-S_t)dK_t=0.$
\end{enumerate}
\end{definition}
\medskip
\begin{definition}
A triple of processes $(Y_*,Z_*, K_*)$ (resp. $(Y^*,Z^*,K^*)$) of $\mathcal{S}^{2}\times
\mathcal{M}^{2}\times\mathrm{L}^{2}(\Omega)$ is said to be a minimal (resp. a maximal) solution of RBDSDE \eqref{1} if for any other solution $(Y,Z, K)$ of \eqref{1}, we have $Y_*\leq Y$ (resp.$Y\leq Y^*$).
\end{definition}
In this note, we assume that $f$ satisfies some of the following conditions:
\begin{enumerate}
\item[(H0)] $f:\Omega \times [0,T]\times \R \times
\R^{d}\rightarrow \R$ is jointly measurable satisfies $f(.,0,0)\in\mathcal{M}^{2}(0,T,\mathbb{R}^{})$ and there exists a constant $C>0$ such that for all $(t,y_i, z_i) \in [0,T]\times\mathbb{R} \times \mathbb{R}^{
 d}$, $i=1,2$ \\ $|f(t,y_1,z_1)-f(t,y_1,z_2)|\leq C(|y_1-y_2|+\|z_1-z_2\|).$
\item[(H1)] For every $(t,\omega)\in[0,T]\times \Omega$, the map $(y,z) \mapsto f(t,y,z)$  is continuous.
\item[(H2)] There exists a process $\varphi_.\in\mathcal{M}^{2}(0,T,\mathbb{R})$ with positive values and a positive constant $\kappa>0$ such that\ $|f(t,y,z)|\leq \varphi_t+\kappa(|y|+\|z\|)$, for all $(t,y, z) \in[0,T]\times\mathbb{R} \times \mathbb{R}^{d}$.
\item[(H3)] For every $(t,\omega)\in[0,T]\times \Omega$, $z \in \mathbb{R}^{
 d}$, the map $y \mapsto f(t,y,z)$ is left-continuous and non-decreasing and for $y \in \mathbb{R}$, $z \mapsto f(t,y,z)$ is continuous.
\item[(H4)] There exists a continuous function
$h:\mathbb{R} \times \mathbb{R}^{d}\rightarrow \mathbb{R}$, which
satisfies

$|h(y,z)|\leq \kappa(|y|+\|z\|)$\  for any  $(y, z) \in \mathbb{R}
\times \mathbb{R}^{
 d}$, such that for all $y_1\geq y_2$, $t \in[0,T]$ , $z_1, z_2 \in \mathbb{R}^{
 d}$, we have
$ f(t,y_{1},z_{1})-f(t,y_{2},z_{2}) \geq
h(y_{1}-y_{2},z_{1}-z_{2})$
 \end{enumerate}
Moreover, we assume that:
\begin{enumerate}
\item[(H5)] The terminal condition $\xi$ belongs to $\mathrm{L}^{2}(\Omega,
\mathcal{F}_{T}, \mathbb{P}, \mathbb{R})$.
\item[(H6)] The obstacle $S$ belongs to $ \in\mathcal{S}^{2}([0,T],\mathbb{R}^{})$ such that $S_T\leq \xi$ a.s.
\item[(H7)] $g:\Omega \times [0,T]\times \mathbb{R} \times
\mathbb{R}^{d}\rightarrow \mathbb{R}^l$ is jointly measurable satisfies $g(.,0,0)\equiv0$ and there exist two constants $C
> 0$ and  $0 < \alpha < 1$ such that for all $t \in [0,T] , (y_{1},z_{1}), (y_{2}, z_{2}) \in
\mathbb{R} \times \mathbb{R}^{ d}$,\\
$\|g(t,y_{1},z_{1})-g(t,y_{2},z_{2})\|^{2} \leq
C\mid y_{1}-y_{2}\mid^{2}+\alpha \| z_{1}-z_{2}\|^{2}.$
 \end{enumerate}

\section{Main results}
Our purpose is to establish an existence of minimal or maximal solution to RBDSDEs \eqref{1} when parameters $(f, g, \xi, S)$
satisfy (H2)-(H7).

To attain our goal, we need to establish first the following theorem which is an extension of the existence
result established in Bahlali et al. \cite{Bah}.
\begin{theorem}\label{l0}
Assume that $(H1)$-$(H2)$ and $(H5)$-$(H7)$ hold. Then, the RBDSDE \eqref{1} has a solution. Moreover, there is a minimal and a maximal solution to RBDSDE \eqref{1}.
\end{theorem}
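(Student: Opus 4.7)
The plan is to reduce Theorem~\ref{l0} to the Lipschitz setting by sandwiching $f$ between two monotone sequences of Lipschitz generators and then passing to the limit. Concretely, I would introduce the inf- and sup-convolutions
\begin{equation*}
f_n(t,y,z)=\inf_{(u,v)\in\R^{1+d}}\bigl\{f(t,u,v)+n|y-u|+n\|z-v\|\bigr\},\qquad f^{\,n}(t,y,z)=\sup_{(u,v)\in\R^{1+d}}\bigl\{f(t,u,v)-n|y-u|-n\|z-v\|\bigr\}.
\end{equation*}
Each $f_n$ and $f^{\,n}$ is $n$-Lipschitz in $(y,z)$, inherits the linear growth bound (H2) uniformly in $n$, and, by the continuity (H1), satisfies $f_n\uparrow f$ and $f^{\,n}\downarrow f$ pointwise on $\Omega\times[0,T]\times\R\times\R^{d}$.

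For each $n$, the Lipschitz RBDSDE with data $(f_n,g,\xi,S)$ admits a unique solution $(Y^{n}_{*},Z^{n}_{*},K^{n}_{*})\in\mathcal{S}^{2}\times\mathcal{M}^{2}\times\mathrm{L}^{2}$ thanks to the existence result of Bahlali et al.~\cite{Bah}. A standard It\^o computation applied to $e^{\beta t}|Y^{n}_{*,t}|^{2}$, exploiting (H5)--(H7) and the Skorohod equality $\int_{0}^{T}(Y^{n}_{*,t}-S_{t})dK^{n}_{*,t}=0$, together with Gronwall's lemma, yields uniform a priori bounds on $(Y^{n}_{*},Z^{n}_{*},K^{n}_{*})$; the strict contraction $\alpha<1$ in (H7) is what allows to absorb the backward stochastic integral. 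The comparison theorem proved earlier in the paper then delivers the monotonicity $Y^{n}_{*}\leq Y^{n+1}_{*}$, so $Y^{n}_{*}\uparrow Y_{*}$ pointwise. Applying It\^o's formula to $(Y^{n}_{*}-Y^{m}_{*})^{2}$ together with the uniform bounds shows that $(Z^{n}_{*})$ is Cauchy in $\mathcal{M}^{2}$ and, via a Burkholder--Davis--Gundy estimate on the equation itself, that $(K^{n}_{*})$ is Cauchy in $\mathcal{S}^{2}$; denote the limits by $Z_{*}$ and $K_{*}$. Passing to the limit in the dynamics, with (H1) and (H2) supplying the uniform integrability needed for the drift and (H7) handling the $g$-integral, gives that $(Y_{*},Z_{*},K_{*})$ solves \eqref{1}, and $Y_{*}\geq S$ survives the monotone limit. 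Minimality follows from comparison: any other solution $(Y,Z,K)$ of \eqref{1} satisfies $f_{n}\leq f$, hence $Y^{n}_{*}\leq Y$, and letting $n\to\infty$ yields $Y_{*}\leq Y$. The maximal solution is obtained symmetrically from $f^{\,n}$.

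The main obstacle will be passing to the limit in the Skorohod condition $\int_{0}^{T}(Y^{n}_{*,t}-S_{t})dK^{n}_{*,t}=0$, because this requires stronger than $\mathcal{M}^{2}$-convergence. I would handle it by first establishing continuity of the limit $Y_{*}$ (from the It\^o representation and the BDG estimate on the martingale part), then invoking a Dini-type argument on the monotone continuous processes $Y^{n}_{*}$ to obtain uniform convergence on $[0,T]$, and finally deducing uniform convergence of $K^{n}_{*}$ from the equation itself. These two ingredients together permit passage to the limit in the Stieltjes integral and yield $\int_{0}^{T}(Y_{*,t}-S_{t})dK_{*,t}=0$, completing the identification of the minimal (and, symmetrically, the maximal) solution.
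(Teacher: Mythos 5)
Your proposal is correct and follows essentially the same route as the paper, which likewise regularizes $f$ by Lipschitz inf-convolutions (needing $n\geq\kappa$ so the infimum is finite, a detail worth stating) and then defers the monotone-approximation, a priori estimate, comparison, and passage-to-the-limit machinery to Theorem 3.3 of Bahlali et al.\ \cite{Bah}; you simply reconstruct that omitted argument in full, including the Dini-type treatment of the Skorohod condition. The only cosmetic difference is that you convolve in both $(y,z)$ (the standard Lepeltier--San Martin construction, which genuinely yields $n$-Lipschitz approximants in both variables), whereas the paper writes the convolution in $y$ only.
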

\begin{proof}
We define $f_{n}(t,y,z)=\underset{u \in \Q}\inf \Big\{f(t,u,z)+
n\mid y-u\mid \Big\}$, for $n\geq \kappa$. For every $n\geq \kappa$, $f_{n}$ is uniformly $n$-Lipschitz and ($f_{n}$) converges suitably to $f$. Now, as $|f(t,y,z)|\leq \varphi_t+\kappa(|y|+\|z\|)$, the rest of is the adaptation of Theorem 3.3 in Bahlali et al. \cite{Bah} where $|f(t,y,z)|\leq \kappa(1+|y|+\|z\|)$. Therefore it is ommitted.
\end{proof}
We also need the following comparison results.
\begin{theorem}[Comparison with at least one Lipschitz function]\label{l0a}
Let $g$, $S^i$ and $\xi^i$ (i=1,2) satisfy (H5)-(H7). Assume that RBDSDEs $(f^1, g, \xi^1, S^1)$ and $(f^2, g, \xi^2, S^2)$ have solutions $(Y^1,Z^1, K^1)$ and $(Y^2,Z^2, K^2)$, respectively. Assume moreover that:
\medskip
\begin{enumerate}
\item[(i)\  ] $\xi^1\leq\xi^2$\ \ a.s.,

\item[(ii)\ ] $S_t^1\leq S_t^2$\ \ a.s., for all
$t \in [0,T]$

\item[(iii) ] $f^1$ satisfies (H0) such that $f^1(t,Y^2,Z^2)\leq f^2(t,Y^2,Z^2)$\ \ a.s.

 (resp. $f^2$ satisfies (H0) such that $f^1(t,Y^1,Z^1)\leq f^2(t,Y^1,Z^1)$\ \ a.s.).
 \end{enumerate}
 \medskip
 Then, $Y_t^1\leq Y_t^2$\ \ a.s., for all $t\in[0,T]$.
\end{theorem}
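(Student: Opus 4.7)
The plan is to write the standard BSDE-comparison Itô argument adapted to the doubly stochastic, reflected setting. Set $\hat Y_t=Y^1_t-Y^2_t$, $\hat Z_t=Z^1_t-Z^2_t$, $\hat K_t=K^1_t-K^2_t$, and
\[
\Delta f_s=f^1(s,Y^1_s,Z^1_s)-f^2(s,Y^2_s,Z^2_s),\qquad \Delta g_s=g(s,Y^1_s,Z^1_s)-g(s,Y^2_s,Z^2_s),
\]
so that $\hat Y$ solves a BDSDE with terminal value $\xi^1-\xi^2\le 0$, driver $\Delta f$, backward driver $\Delta g$, pushing term $\hat K$. I would apply the BDSDE Itô formula to $(\hat Y_t^+)^2$ (approximating $x\mapsto (x^+)^2$ by smooth convex functions if needed, as in Pardoux--Peng), which yields
\[
\mathbb{E}(\hat Y_t^+)^2+\mathbb{E}\!\int_t^{T}\!\mathbf{1}_{\{\hat Y_s>0\}}\|\hat Z_s\|^2\,ds
= 2\,\mathbb{E}\!\int_t^{T}\!\hat Y_s^+\,\Delta f_s\,ds+2\,\mathbb{E}\!\int_t^{T}\!\hat Y_s^+\,d\hat K_s+\mathbb{E}\!\int_t^{T}\!\mathbf{1}_{\{\hat Y_s>0\}}\|\Delta g_s\|^2\,ds,
\]
after using $(\hat Y_T^+)^2=0$ and that the $dW$ and $\overleftarrow{dB}$ stochastic integrals have zero expectation.

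The next step is to dispose of the reflection term. On $\{\hat Y_s>0\}$ one has $Y^1_s>Y^2_s\ge S^2_s\ge S^1_s$, so the Skorokhod condition $\int_0^T(Y^1_s-S^1_s)\,dK^1_s=0$ forces $\mathbf{1}_{\{\hat Y_s>0\}}\,dK^1_s=0$; since $\hat Y_s^+\,dK^2_s\ge 0$, the bracket $\int_t^T\hat Y_s^+\,d\hat K_s\le 0$ and can be dropped. For the driver, in the first variant I split
\[
\Delta f_s=\bigl[f^1(s,Y^1_s,Z^1_s)-f^1(s,Y^2_s,Z^2_s)\bigr]+\bigl[f^1(s,Y^2_s,Z^2_s)-f^2(s,Y^2_s,Z^2_s)\bigr];
\]
the second bracket is $\le 0$ on $\{\hat Y_s>0\}$ by hypothesis (iii), while the first is bounded by $C(|\hat Y_s|+\|\hat Z_s\|)$ thanks to (H0). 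In the resp.-variant one splits symmetrically around $f^2$ and uses Lipschitz continuity of $f^2$ together with $f^1(\cdot,Y^1,Z^1)\le f^2(\cdot,Y^1,Z^1)$.

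Using $2C\hat Y_s^+\|\hat Z_s\|\le \tfrac{C^2}{\varepsilon}(\hat Y_s^+)^2+\varepsilon\|\hat Z_s\|^2$ and the (H7) bound $\|\Delta g_s\|^2\le C(\hat Y_s^+)^2+\alpha\|\hat Z_s\|^2$ on $\{\hat Y_s>0\}$, one obtains
\[
\mathbb{E}(\hat Y_t^+)^2+(1-\alpha-\varepsilon)\,\mathbb{E}\!\int_t^{T}\!\mathbf{1}_{\{\hat Y_s>0\}}\|\hat Z_s\|^2\,ds\le M\,\mathbb{E}\!\int_t^{T}(\hat Y_s^+)^2\,ds
\]
for a constant $M$ depending on $C,\alpha,\varepsilon$. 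Choosing $\varepsilon\in(0,1-\alpha)$ (this is where $\alpha<1$ from (H7) is essential) and applying Gronwall's lemma gives $\mathbb{E}(\hat Y_t^+)^2=0$ for every $t\in[0,T]$, hence $Y^1_t\le Y^2_t$ a.s.

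The main obstacle I expect is the reflection bookkeeping: one must be careful that although $\hat K$ need not be monotone, the pointwise argument $\{\hat Y_s>0\}\Rightarrow Y^1_s>S^1_s$ uses in an essential way the ordering $S^1\le S^2$ together with $Y^2\ge S^2$, and this is what kills the $dK^1$-contribution while the $dK^2$-contribution has the favorable sign. Everything else is a routine Itô/Gronwall computation, with the $\alpha<1$ hypothesis doing the same job here as in the standard BDSDE theory to absorb the $\|\hat Z\|^2$ from the $\Delta g$-bracket.
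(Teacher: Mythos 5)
Your proposal is correct and follows essentially the same route as the paper's proof: Itô's formula applied to $|(Y^1_t-Y^2_t)^+|^2$, the splitting $f^1(s,Y^1_s,Z^1_s)-f^2(s,Y^2_s,Z^2_s)\le f^1(s,Y^1_s,Z^1_s)-f^1(s,Y^2_s,Z^2_s)$ handled by (H0) and Young's inequality, elimination of the reflection term via $Y^1_s>Y^2_s\ge S^2_s\ge S^1_s$ and the Skorokhod condition, absorption of $\|\hat Z\|^2$ using $\alpha<1$ from (H7), and Gronwall. The only cosmetic differences are that you make explicit the smoothing of $x\mapsto(x^+)^2$ and spell out the symmetric ``resp.''\ variant, both of which the paper leaves implicit.
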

\begin{proof}
Applying Itô's formula to
$|(Y_{t}^{1}-Y_{t}^{2})^{+}|^2$, we have
\begin{eqnarray*}
&&\E|(Y_{t}^{1}- Y_{t}^{2})^{+}|^{2}+\E\int_{t}^{T}\textbf{1}_{\{Y_{s}^{1}>Y_{s}^{2}\}}\|Z_{s}^{1}-Z_{s}^{2}\|^{2}ds\\
&=&\E|(\xi^{1}- \xi^{2})^{+}|^{2}+2\E\int_{t}^{T}( Y_{s}^{1}-Y_{s}^{2})^{+}\left(f^{1}(s,Y_{s}^{1},Z_{s}^{1})-f^{2}(s,Y_{s}^{2},Z_{s}^{2})\right)ds\\
&&+2\E\int_{t}^{T}( Y_{s}^{1}-Y_{s}^{2})^{+}(dK_{s}^{1}-dK_{s}^{2})
+\E\int_{t}^{T}\textbf{1}_{\{Y_{s}^{1}>Y_{s}^{2}\}}\|g(s,Y_{s}^{1},Z_{s}^{1})-g(s,Y_{s}^{2},Z_{s}^{2})\|^{2}ds.\nonumber
\end{eqnarray*}
From (i), $\E|(\xi^{1}- \xi^{2})^{+}|^{2}=0$ and from (iii), we have
\begin{eqnarray*}
f^{1}(s,Y_{s}^{1},Z_{s}^{1})-f^{2}(s,Y_{s}^{2},Z_{s}^{2})\leq f^{1}(s,Y_{s}^{1},Z_{s}^{1})-f^{1}(s,Y_{s}^{2},Z_{s}^{2}).
\end{eqnarray*}
Therefore, from Young inequality, and the fact that $f^1$ satisfies $(H0)$ and $g$ verify $(H7)$, we get
\begin{eqnarray*}
&&\E|(Y_{t}^{1}- Y_{t}^{2})^{+}|^{2}+\E\int_{t}^{T}\textbf{1}_{\{Y_{s}^{1}>Y_{s}^{2}\}}\|Z_{s}^{1}-Z_{s}^{2}\|^{2}ds \\&\leq&\left(\frac{1}{\beta}+\beta C+C\right)\E\int_{t}^{T}|( Y_{s}^{1}-Y_{s}^{2})^{+}|^2ds+(\beta C+\alpha)\E\int_{t}^{T}\textbf{1}_{\{Y_{s}^{1}>Y_{s}^{2}\}}\|Z_{s}^{1}-Z_{s}^{2}\|^2ds
\\&&+2\E\int_{t}^{T}( Y_{s}^{1}-Y_{s}^{2})^{+}(dK_{s}^{1}-dK_{s}^{2}).\nonumber
\end{eqnarray*}
Since \ $Y_{t}^{1}>S_{t}^{2}\geq S_{t}^{1}$ \ on the set $\{Y_{s}^{1}>Y_{s}^{2}\}$ 
we derive that
\begin{eqnarray*}
\E\int_{t}^{T}( Y_{s}^{1}-Y_{s}^{2})^{+}(dK_{s}^{1}-dK_{s}^{2})=-\E\int_{t}^{T}( Y_{s}^{1}-Y_{s}^{2})^{+}dK_{s}^{2}\leq0.
\end{eqnarray*}
Hence,
\begin{eqnarray*}
&&\E|(Y_{t}^{1}- Y_{t}^{2})^{+}|^{2}+\E\int_{t}^{T}\textbf{1}_{\{Y_{s}^{1}>Y_{s}^{2}\}}\|Z_{s}^{1}-Z_{s}^{2}\|^{2}ds \\&\leq&\left(\frac{1}{\beta}+\beta C+C\right)\E\int_{t}^{T}|( Y_{s}^{1}-Y_{s}^{2})^{+}|^2ds+(\beta C+\alpha)\E\int_{t}^{T}\textbf{1}_{\{Y_{s}^{1}>Y_{s}^{2}\}}\|Z_{s}^{1}-Z_{s}^{2}\|^2ds.\nonumber
\end{eqnarray*}
Consequently, choosing $0<\beta<\frac{1-\alpha}{C}$ and using Gronwall inequality, we obtain
\medskip
$
\E|(Y_{t}^{1}- Y_{t}^{2})^{+}|^{2}\leq0. \ \ \text{Thus}\ \ \
(Y_{t}^{1}- Y_{t}^{2})^{+}=0\ \ a.s.\ \ i.e.\ \ Y_{t}^{1}\leq Y_{t}^{2}\ \ a.s.,\ \ \forall\ t\in[0,T].\ \ \
$
\end{proof}
\begin{theorem}[Comparison with at least one continuous function]\label{o}
Let $g$, $S^i$ and $\xi^i$ (i=1,2) satisfy (H5)-(H7). Assume that RBDSDEs $(f^1, g, \xi^1, S^1)$ and $(f^2, g, \xi^2, S^2)$ have solutions $(Y^1,Z^1, K^1)$ and $(Y^2,Z^2, K^2)$, respectively. Assume moreover that:
\begin{enumerate}
\item[(i)\  ] $\xi^1\leq\xi^2$\ \ a.s.,

\item[(ii)\ ] $S_t^1\leq S_t^2$\ \ a.s., for all
$t \in [0,T],$

\item[(iii) ] $f^1$ satisfies (H1)-(H2) such that $f^1(t,Y^2,Z^2)\leq f^2(t,Y^2,Z^2)$ a.s. and $(Y^1,Z^1, K^1)$ is the minimal solution\ \
 (resp. $f^2$ satisfies (H1)-(H2) such that $f^1(t,Y^1,Z^1)\leq f^2(t,Y^1,Z^1)$ a.s. and $(Y^2,Z^2, K^2)$ is the maximal solution).
 \end{enumerate}
Then, $Y_t^1\leq Y_t^2$\ \ a.s., for all $t\in[0,T]$.
\end{theorem}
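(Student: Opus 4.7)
The plan is to approximate the continuous generator $f^1$ from below by Lipschitz coefficients and to transfer the inequality along the approximation via Theorem~\ref{l0a}. I discuss the first alternative; the second (maximal solution) is obtained symmetrically with a sup-convolution in place of the inf-convolution and all inequalities reversed.

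Using the inf-convolution already invoked in the proof of Theorem~\ref{l0}, I would set
\[
f^1_n(t,y,z) = \inf_{u \in \mathbb{Q}}\bigl\{ f^1(t,u,z) + n\,|y-u|\bigr\}, \qquad n \geq \kappa.
\]
Arguments identical to those in Theorem~\ref{l0} show that each $f^1_n$ satisfies (H0), that the sequence $(f^1_n)$ is non-decreasing and bounded above by $f^1$, that the growth bound (H2) is preserved uniformly in $n$, and that the continuity of $f^1$ from (H1) gives $f^1_n(t,y,z) \nearrow f^1(t,y,z)$ pointwise. By the Lipschitz theory underlying Theorem~\ref{l0}, each RBDSDE $(f^1_n, g, \xi^1, S^1)$ then has a solution $(Y^{1,n}, Z^{1,n}, K^{1,n})$.

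Theorem~\ref{l0a} would then enter three times. First, applied to $(f^1_n, f^1_{n+1})$ with identical terminal value and obstacle, it yields $Y^{1,n}_t \leq Y^{1,n+1}_t$, so the sequence is non-decreasing in $n$. Second, because hypothesis (iii) gives
\[
f^1_n(t, Y^2_t, Z^2_t) \leq f^1(t, Y^2_t, Z^2_t) \leq f^2(t, Y^2_t, Z^2_t),
\]
and $\xi^1 \leq \xi^2$, $S^1 \leq S^2$ together with the Lipschitz character of $f^1_n$ fit exactly the framework of Theorem~\ref{l0a}, comparison of $(f^1_n, f^2)$ yields $Y^{1,n}_t \leq Y^2_t$ almost surely for every $t$ and $n$. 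Third, applied to $(f^1_n, f^1)$ with $(Y^1, Z^1, K^1)$ as the second triple (and using $f^1_n \leq f^1$), it gives $Y^{1,n}_t \leq Y^1_t$, so the monotone limit $\tilde Y^1 := \sup_n Y^{1,n}$ satisfies $\tilde Y^1 \leq Y^1$. Conversely, passing to the limit in the approximating RBDSDEs along the lines of the convergence step of Theorem~\ref{l0} produces a solution $(\tilde Y^1, \tilde Z^1, \tilde K^1)$ of $(f^1, g, \xi^1, S^1)$, and minimality of $(Y^1, Z^1, K^1)$ forces $Y^1 \leq \tilde Y^1$. Hence $\tilde Y^1 = Y^1$, and letting $n\to\infty$ in $Y^{1,n}_t \leq Y^2_t$ concludes the proof.

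The main obstacle is precisely the convergence step just invoked: one must pass to the limit simultaneously in the forward stochastic integral, in the backward integral against $B$, in the increasing process $K^{1,n}$, and in the Skorohod reflection condition $\int_0^T (Y^{1,n}_s - S^1_s)\,dK^{1,n}_s = 0$, with only the pointwise continuity of $f^1$ available to identify the generator in the limit. The necessary compactness is supplied by the uniform $\mathcal{S}^2 \times \mathcal{M}^2$ a priori estimates coming from (H2), (H7) and the dominating bound $Y^{1,n} \leq Y^2 \in \mathcal{S}^2$, together with the monotonicity of $(Y^{1,n})$. Since the very same passage to the limit is carried out in the proof of Theorem~\ref{l0}, no genuinely new ingredient is required here.
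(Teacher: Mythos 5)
Your proposal is correct and follows essentially the same route as the paper: the inf-convolution (Lepeltier--San Martin) approximation of $f^1$, comparison of each Lipschitz approximant with $f^2$ via Theorem~\ref{l0a}, and passage to the limit exactly as in the proof of Theorem~\ref{l0} (Bahlali et al.). The only cosmetic difference is that the paper directly invokes that the approximating solutions converge to the minimal solution $(Y^1,Z^1,K^1)$, whereas you re-identify the monotone limit with $Y^1$ by a sandwich argument using minimality; both versions rest on the same deferred convergence step.
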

\begin{proof} For any fixed $\kappa>0$, let us define
$$f_{n}^1(t,y,z)=\underset{u \in \Q}\inf \Big\{f^1(t,u,z)+
n\mid y-u\mid \Big\},\;\; \forall\; n\geq \kappa.$$ Hence, for every $n\geq \kappa$, $f_{n}^{1}$ is uniformly $n$-Lipschitz, linear growth and converges
suitably to $f^1$ (cf. Lepeltier and San Martin \cite{LSm}). Then we get from Theorem \ref{l0} that for every $n\geq \kappa$, RBDSDE $(f_n^1, g, \xi^1, S^1)$
has a unique adapted solution $(Y^{1,n}, Z^{1,n}, K^{1,n} )$ which converges to the minimal solution $(Y^1,Z^1, K^1)$ to the  RBDSDE $(f^1, g, \xi^1, S^1)$
(cf. proof of Theorem 3.3 in Bahlali et al. \cite{Bah}). Moreover, for all $n\geq \kappa$, $f_n^1\leq f^1$. Therefore, from $(iii)$, we have  $f_n^1(t,Y^2,Z^2)\leq f^2(t,Y^2,Z^2)$\ a.s. Then, by Theorem \ref{l0a}, we get  $Y^{1,n}\leq Y^2$ a.s., for all $n\geq \kappa$. Hence, we have $Y^{1}\leq Y^2$.

On the other hand, if we define $$f_{n}^2(t,y,z)=\underset{u \in \Q}\sup \Big\{f^2(t,u,z)-
n\mid y-u\mid \Big\}\;\;\forall\ n\geq \kappa,$$
it is easy to check that for all $n\geq \kappa$, $f^1(t,Y^1,Z^1)\leq f^2(t,Y^1,Z^1)\leq f_n^2(t,Y^1,Z^1)$ and $f_{n}^2$ is uniformly $n$-Lipschitz, linear growth and converges suitably to $f^2$. Then, applying again Theorem \ref{l0a}, $Y^{1}\leq Y^{2,n}$ a.s., for all $n\geq \kappa$, where $(Y^{2,n}, Z^{2,n}, K^{2,n})$ is the unique solution to BDSDEs $(f_n^2, g, \xi^2,S^2)$ which converges to $(Y^{2}, Z^{2}, K^{2})$, the maximal solution of BDSDEs $(f^2, g, T, \xi^2)$ (cf. proof of Theorem 3.3 in Bahlali et al. \cite{Bah}). Therefore, we get $Y^{1} \leq Y^{2}$ a.s.
\end{proof}
\begin{lemma}\label{l1}
Let $\phi$ belongs in $\mathcal{M}^{2}(0,T; \mathbb{R})$ and $h$ appear in assumption $(H5)$. For a continuous function of finite variation $A$ belongs in $\mathrm{L}^{2}(\Omega,
\mathbb{P}, \mathbb{R})$ and verifies $A_0=0$, we consider  the processes $(\bar{Y},\bar{Z})\in\mathcal{S}^{2}([0,T],\mathbb{R})\times
\mathcal{M}^{2}(0,T,\mathbb{R}^{ d})$ such that:
\begin{eqnarray}\label{2}
&&(i)\;\bar{Y}_t=\xi+\int_{t}^{T}\left[h(\bar{Y}_s,\bar{Z}_s)+\phi_s\right]ds+\int_{t}^{T}g(s,\bar{Y}_s,\bar{Z}_s)\overleftarrow{dB}_{s}+A_T-A_t-\int_{t}^{T}\bar{Z}_sdW_{s},
,\ \ t \in [0,T]\nonumber\\
&&(ii)\;\int_0^T\bar{Y}_{s}^{-}dA_{s}\geq0.
\end{eqnarray}
Then, if $\phi_t\geq0$ and $\xi \geq 0$, we have $\bar{Y}_t\geq 0,$ \    $\P$-a.s. \ $\forall\ t\in [0,T]$.
\end{lemma}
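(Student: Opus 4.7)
The plan is to apply the Pardoux--Peng Itô formula for BDSDEs to the semi-convex function $\psi(y)=(y^-)^2$ (with $\psi'(y)=-2y^-$ and $\psi''(y)=2\mathbf{1}_{\{y<0\}}$) and then run a standard Gronwall argument to show that $\E(\bar Y_t^-)^2=0$ for every $t$. Writing out the formula on $[t,T]$ and taking expectations kills the forward and backward stochastic integrals and yields an identity of the shape
\begin{eqnarray*}
\E(\bar Y_t^-)^2+\E\int_t^T\mathbf{1}_{\{\bar Y_s<0\}}\|\bar Z_s\|^2 ds
&=& \E(\xi^-)^2 -2\E\int_t^T \bar Y_s^-\bigl(h(\bar Y_s,\bar Z_s)+\phi_s\bigr)ds\\
&& -\,2\E\int_t^T \bar Y_s^-\,dA_s+\E\int_t^T\mathbf{1}_{\{\bar Y_s<0\}}\|g(s,\bar Y_s,\bar Z_s)\|^2 ds.
\end{eqnarray*}

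Next I would dispatch the non-quadratic terms using the sign hypotheses. Since $\xi\ge 0$ one has $\xi^-=0$; since $\phi_s\ge 0$ and $\bar Y_s^-\ge 0$, the $\phi$-contribution is nonpositive; and condition (ii), read as $\int_t^T \bar Y_s^-\,dA_s\ge 0$ for every $t$ (the natural pointwise reading that is implicit in Skorohod-type comparison arguments), kills the finite-variation term. For the $h$-piece, on $\{\bar Y_s<0\}$ we have $|\bar Y_s|=\bar Y_s^-$, so the growth bound in (H4), $|h(y,z)|\le \kappa(|y|+\|z\|)$, and Young's inequality give
\[
2\bar Y_s^-|h(\bar Y_s,\bar Z_s)|\le \Bigl(2\kappa+\tfrac{\kappa}{\epsilon}\Bigr)(\bar Y_s^-)^2+\kappa\epsilon\,\mathbf{1}_{\{\bar Y_s<0\}}\|\bar Z_s\|^2.
\]
For the $g$-piece, (H7) with $g(s,0,0)=0$ yields $\mathbf{1}_{\{\bar Y_s<0\}}\|g\|^2\le C(\bar Y_s^-)^2+\alpha\,\mathbf{1}_{\{\bar Y_s<0\}}\|\bar Z_s\|^2$.

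Choosing $\epsilon>0$ so small that $\alpha+\kappa\epsilon<1$, the $\|\bar Z\|^2$ terms on the right can be absorbed into the corresponding term on the left, leaving
\[
\E(\bar Y_t^-)^2\le C''\,\E\int_t^T (\bar Y_s^-)^2\,ds,
\]
to which backward Gronwall applies, giving $\E(\bar Y_t^-)^2=0$ for every $t$, and hence $\bar Y_t\ge 0$ a.s.\ for all $t\in[0,T]$ by continuity of $\bar Y$.

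The main obstacle I anticipate is the handling of the finite-variation term $-2\E\int_t^T \bar Y_s^-\,dA_s$: the statement of (ii) only gives the integral on $[0,T]$, whereas the Itô expansion naturally produces an integral on $[t,T]$. The clean reading, which is the one actually used in analogous proofs for reflected BDSDEs, is that the measure $\bar Y_s^-\,dA_s$ is nonnegative (equivalently, $\int_t^T \bar Y_s^-\,dA_s\ge 0$ for every $t$); once this is adopted, all remaining estimates are routine $L^2$ bookkeeping.
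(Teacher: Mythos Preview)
Your proposal is correct and follows essentially the same route as the paper: apply It\^o's formula to $(\bar Y_t^-)^2$, discard the $\xi$, $\phi$, and $A$ contributions by sign, bound the $h$ and $g$ terms via (H4), (H7), and Young's inequality, absorb the $\|\bar Z\|^2$ term by choosing the Young parameter small enough (the paper takes $0<\beta<(1-\alpha)/(2\kappa^2)$), and conclude by Gronwall. The ``obstacle'' you flag---that (ii) is stated on $[0,T]$ while the argument needs $\int_t^T \bar Y_s^-\,dA_s\ge 0$---is handled in the paper exactly as you suggest, by implicitly reading (ii) pointwise in $t$.
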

\begin{remark}
Let us note that the assumption $(ii)$ in Lemma \ref{l1} is not a technic hypothesis but becomes natural since we are in our framework i.e $\int^T_0(Y_s-S_s)dK_s=0$, where $(Y,Z,K)$ is a solution of
\end{remark}
\begin{proof}
Applying Itô's formula to
$|Y_{t}^{-}|^2$, we have
\begin{eqnarray*}
\E|\bar{Y}_{t}^{-}|^{2}+\E\int_{t}^{T}\textbf{1}_{\{\bar{Y}_{s}< 0\}}\|\bar{Z}_{s}\|^{2}ds &=&\E| \xi^{-}|^{2}-2\E\int_{t}^{T}\bar{Y}_{s}^{-}\Big(h(\bar{Y}_s,\bar{Z}_s)+\phi_s\Big)ds
\\&&-2\E\int_{t}^{T}\bar{Y}_{s}^{-}dA_{s}
+\E\int_{t}^{T}\textbf{1}_{\{\bar{Y}_{s}< 0\}}\|g(s,\bar{Y}_{s},\bar{Z}_{s})\|^{2}ds.\nonumber
\end{eqnarray*}
Since $\phi_t\geq0$ and $\xi \geq 0$ and using the fact that $-2\E\int_{t}^{T}\bar{Y}_{s}^{-}dA_{s}\leq0$, we derive that
\begin{eqnarray*}
\E|\bar{Y}_{t}^{-}|^{2}+\E\int_{t}^{T}\textbf{1}_{\{\bar{Y}_{s}< 0\}}\|\bar{Z}_{s}\|^{2}ds &\leq&-2\E\int_{t}^{T}\bar{Y}_{s}^{-}h(\bar{Y}_s,\bar{Z}_s)ds
+\E\int_{t}^{T}\textbf{1}_{\{\bar{Y}_{s}< 0\}}\|g(s,\bar{Y}_{s},\bar{Z}_{s})\|^{2}ds.\nonumber
\end{eqnarray*}
From $(H7)$, we get $\|g(s,y,z)\|^2\leq C|y|^2+\alpha\|z\|^2$ which together with $(H4)$ and Young inequality provide
\begin{eqnarray*}
\E|\bar{Y}_{t}^{-}|^{2}+\E\int_{t}^{T}\textbf{1}_{\{\bar{Y}_{s}< 0\}}\|\bar{Z}_{s}\|^{2}ds &\leq&\left(\frac{1}{\beta}+2\beta \kappa^2+C\right)\E\int_{t}^{T}|\bar{Y}_{s}^{-}|^2ds+(2\beta \kappa^2+\alpha)\E\int_{t}^{T}\textbf{1}_{\{\bar{Y}_{s}< 0\}}\|\bar{Z}_{s}\|^2ds
.\nonumber\displaystyle
\end{eqnarray*}
Therefore, choosing $0<\beta<\frac{1-\alpha}{2\kappa^{2}}$ and using Gronwall inequality, we obtain
$\bar{Y}_{t}^{-}=0\ \ \P$-a.s.\ \ $\forall\ t\in[0,T]$, which implies that $\ \ \bar{Y}_{t}\geq 0\ \ \P$-a.s. $\forall\ t\in[0,T]$.
\end{proof}
Now, we are ready to prove our main result.
\begin{theorem}\label{owo}
Under assumptions $(H2)$-$(H7)$, the RBDSDE \eqref{1} has at least one solution. Also, there is a minimal solution $(\underline{y},\underline{z}, \underline{k})$ to RBDSDE \eqref{1}.
\end{theorem}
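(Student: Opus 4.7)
The plan is to follow the monotone approximation scheme. First I would define the Lepeltier--San Martin inf-convolution
\[
f_n(t,y,z) \;=\; \inf_{u \in \Q}\{f(t,u,z) + n|y-u|\}, \qquad n\ge\kappa,
\]
and verify that each $f_n$ is $n$-Lipschitz in $(y,z)$ and still satisfies $|f_n(t,y,z)|\le \varphi_t+\kappa(|y|+\|z\|)$ with the same $\varphi$ and $\kappa$ as $f$ (for the lower bound, use $-\kappa|u|+n|y-u|\ge -\kappa|y|$, valid when $n\ge\kappa$). Hypothesis (H3)---$y\mapsto f(t,y,z)$ non-decreasing and left-continuous, $z\mapsto f(t,y,z)$ continuous---is exactly what is needed to guarantee $f_n(t,y,z)\uparrow f(t,y,z)$ pointwise.

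Since each $f_n$ is Lipschitz and of linear growth, Theorem~\ref{l0} furnishes a solution $(Y^n,Z^n,K^n)\in\mathcal{S}^2\times\mathcal{M}^2\times \mathrm{L}^2(\Omega)$ of the RBDSDE with data $(f_n,g,\xi,S)$. The comparison Theorem~\ref{l0a}, together with $f_n\le f_{n+1}$, yields $Y^n\le Y^{n+1}$. Standard a priori estimates using (H2), (H6), (H7), and the Skorokhod identity $\int_0^T(Y^n-S)\,dK^n=0$ give
\[
\sup_n\Bigl(\E\bigl[\sup_{0\le t\le T}|Y^n_t|^2\bigr]+\E\int_0^T\|Z^n_s\|^2ds+\E|K^n_T|^2\Bigr)<\infty,
\]
so $Y^n\uparrow \underline{y}$ almost surely and in $\mathcal{S}^2$.

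The crux is showing that $(Z^n)$ is Cauchy in $\mathcal{M}^2$. Applying It\^o's formula to $|Y^n-Y^m|^2$ for $n>m$, using (H7) on the $g$-term, and noting that the reflection contribution
\[
\E\int_t^T (Y^n_s-Y^m_s)(dK^n_s-dK^m_s)\;\le\;0
\]
(because $Y^n,Y^m\ge S$ while $dK^n$, $dK^m$ are carried by $\{Y^n=S\}$, $\{Y^m=S\}$ respectively), one reduces to controlling $\E\int_0^T (Y^n-Y^m)(f_n(s,Y^n,Z^n)-f_m(s,Y^m,Z^m))ds$. The uniform linear growth of $f_n$, Young's inequality and Gronwall then give $\E\int_0^T\|Z^n-Z^m\|^2ds\to 0$; hence $Z^n\to \underline{z}$ in $\mathcal{M}^2$, and reading $K^n$ off the RBDSDE yields $K^n\to \underline{k}$ in $\mathcal{S}^2$. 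I would then pass to the limit in each term to identify $(\underline{y},\underline{z},\underline{k})$ as a solution of the original RBDSDE; the only non-routine step is the convergence $f_n(s,Y^n,Z^n)\to f(s,\underline{y},\underline{z})$, which follows from $f_n\uparrow f$, left-continuity and monotonicity of $f$ in $y$ with $Y^n\uparrow \underline{y}$, continuity of $f$ in $z$ with $Z^n\to \underline{z}$ along an a.s.\ subsequence, and dominated convergence via the linear growth bound. The Skorokhod condition $\int_0^T(\underline{y}-S)\,d\underline{k}=0$ is inherited from its validity for each $(Y^n,K^n)$.

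For minimality, let $(Y,Z,K)$ be any other solution of the RBDSDE with $(f,g,\xi,S)$. Since $f_n\le f$ implies $f_n(s,Y,Z)\le f(s,Y,Z)$, Theorem~\ref{l0a} (with Lipschitz $f^1=f_n$) gives $Y^n\le Y$ for every $n$, and passing to the limit yields $\underline{y}\le Y$; alternatively one can invoke Lemma~\ref{l1} applied to $\bar Y=Y-Y^n$, using (H4) to dominate the generator difference from below by $h(\bar Y,\bar Z)$ plus a non-negative remainder. The main obstacle in the scheme will be the Cauchy estimate for $(Z^n)$ and the corresponding transfer of the Skorokhod condition to the limit, since the reflection couples the $Y$- and $Z$-estimates in a way absent from the unreflected discontinuous setting of N'zi--Owo.
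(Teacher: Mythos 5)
Your route is genuinely different from the paper's. The paper never regularizes the discontinuous $f$: it runs the monotone Picard-type iteration \eqref{11}, whose generator $f(s,y^{n-1}_s,z^{n-1}_s)+h(y-y^{n-1}_s,z-z^{n-1}_s)$ is continuous thanks to $h$ (this is exactly where (H4) enters), sandwiches the iterates between the extremal solutions of \eqref{eq3} and \eqref{eq5}, proves monotonicity through Lemma \ref{l1}, and obtains minimality from Theorem \ref{o} (which is why each iterate must be taken as a minimal solution). You instead mollify $f$ itself by inf-convolution, use Theorem \ref{l0} at each stage, and get both the monotonicity of $Y^n$ and the minimality of the limit from the Lipschitz comparison Theorem \ref{l0a} alone; in particular your argument makes no use of (H4), of Lemma \ref{l1}, or of Theorem \ref{o}, and your minimality step ($f_n\le f$ plus Theorem \ref{l0a}) is lighter than the paper's. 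If carried out correctly this is a legitimate, arguably leaner, alternative.

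There is, however, one concrete step that fails as written. With $f_n(t,y,z)=\inf_{u\in\Q}\{f(t,u,z)+n|y-u|\}$ the function is $n$-Lipschitz in $y$, but in $z$ it is an infimum of functions that are merely continuous (by (H3) $f$ is not Lipschitz in $z$), hence in general only upper semicontinuous in $z$: the claim that $f_n$ is ``$n$-Lipschitz in $(y,z)$'' is false, and you need precisely that to invoke Theorem \ref{l0a} (hypothesis (H0)) and even Theorem \ref{l0} (hypothesis (H1)). The repair is to convolve in both variables, $f_n(t,y,z)=\inf_{(u,v)\in\Q^{1+d}}\{f(t,u,v)+n|y-u|+n\|z-v\|\}$, but then the monotone convergence $f_n\uparrow f$ is no longer automatic: it requires $f$ to be \emph{jointly} lower semicontinuous, which does follow from (H3) (combine monotonicity and left-continuity in $y$ with continuity in $z$), and the same structure is needed to see that the infimum over $\Q$ equals the infimum over $\mathbb{R}^{1+d}$; this is the real work behind your sentence ``(H3) is exactly what is needed'' and must be proved. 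Two further inaccuracies: the Cauchy estimate for $(Z^n)$ cannot be closed by ``Young plus Gronwall,'' because the $f_n$ are not equi-Lipschitz (their constants blow up with $n$) and $f$ is discontinuous; the correct mechanism, which is also the paper's, is Cauchy--Schwarz against the uniform $\mathcal{M}^2$-bound of $f_n(\cdot,Y^n,Z^n)$ (linear growth plus the a priori estimates) together with the $\mathcal{M}^2$-convergence of $Y^n$, absorbing the $g$-term through $\alpha<1$. Finally, at the point where you assert $Y^n\to\underline{y}$ in $\mathcal{S}^2$ you only have monotone convergence dominated in $\mathcal{M}^2$; the sup-norm convergence comes after the $Z$- and $K$-estimates, and the passage of the Skorokhod condition to the limit should be justified by Saisho's convergence result, as in the paper, rather than by bare ``inheritance.''
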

\begin{proof}
By virtue of Theorem \ref{l0}, let consider the processes $(y^0,z^0, k^{0})$, $(\tilde{y}^0,\tilde{z}^0, \tilde{k}^0)$ and the sequence of processes $\left\{(y^n,z^n, k^{n})\right\}_{n\geq1}$ respectively minimal solution of the following RBDSDE: for all $t\in[0,T]$,
\begin{eqnarray}\label{eq3}
\left\{
  \begin{array}{ll}
    & y_{t}^{0}=\xi
+\int_{t}^{T}(-\kappa|y_{s}^{0}|-\kappa\|z_{s}^{0}\|-\varphi_s)ds+
k_{T}^{0}-k_{t}^{0}
+\int_{t}^{T}g(s,y_s^{0},z_s^{0})\overleftarrow{dB}_{s}
-\int_{t}^{T}z_s^{0}dW_{s},\\
     &y_{t}^{0}\geq S_{t},\\
    &\int_0^T(y_{s}^{0}-S_{s})dk_{s}^{0}=0
  \end{array}
\right.
\end{eqnarray}

\begin{eqnarray}\label{eq5}
\left\{
  \begin{array}{ll}
    & \tilde{y}_{t}^{0}=\xi
+\int_{t}^{T}(\kappa|\tilde{y}_{s}^{0}|+\kappa\|\tilde{z}_{s}^{0}\|+\varphi_s)ds+
\tilde{k}_{T}^{0}-\tilde{k}_{t}^{0}
+\int_{t}^{T}g(s,\tilde{y}_s^{0},\tilde{z}_s^{0})\overleftarrow{dB}_{s}
-\int_{t}^{T}\tilde{z}_s^{0}dW_{s}, \ t \in [0,T]\hbox{,} \\
     &\tilde{y}_{t}^{0}\geq S_{t},\ \ \ t \in [0,T] \hbox{,} \\
    &\int_0^T(\tilde{y}_{s}^{0}-S_{s})d\tilde{k}_{s}^{0}=0  \hbox{.}
  \end{array}
\right.
\end{eqnarray}
and
\begin{eqnarray}\label{11}
\left\{
  \begin{array}{ll}
    & y_t^n=\xi+\int_{t}^{T}\left(f(s,y_s^{n-1},z_s^{n-1})+
h(y_s^n-y_s^{n-1},z_s^n-z_s^{n-1})\right)ds+k_{T}^{n}-k_{t}^{n}
+\int_{t}^{T}g(s,y_s^{n},z_s^n)\overleftarrow{dB}_{s}-\int_{t}^{T}z_s^ndW_{s},\\
     &y_{t}^n\geq S_{t},\\
    &\int_0^T(y_{s}^n-S_{s})dk_{s}^n=0  \hbox{.}
  \end{array}
\right.
\end{eqnarray}
To complete the proof, it's suffice to show that the sequence $(y^{n},z^{n}, k^{n})$ converges to a limit  $(\underline{y},\underline{z}, \underline{k})$ which is the minimal solution of RBDSDE \eqref{1}. In this end, we shall first prove that for any\ $n\geq 0$,
\begin{eqnarray*}
y_t^{n}\leq y_t^{n+1}
\leq \tilde{y}_t^{0}
    ,\ \ \ \P\mbox{-a.s}. \ \  \forall\ t\in [0,T].
\end{eqnarray*}
For $n\geq0$, we set $(y_t^{n+1,n},z_t^{n+1,n},k_t^{n+1,n})=(y_t^{n+1}-y_t^{n},z_t^{n+1}-z_t^{n},k_t^{n+1}-k_t^{n})$, which satisfies the following equation:
\begin{eqnarray*}\label{}
y_t^{n+1,n}&=&\int_{t}^{T}\left(h(y_s^{n+1,n},z_s^{n+1,n})+
\phi_s^n\right)ds\notag+k_T^{n+1,n}-k_t^{n+1,n}+\int_{t}^{T}g^{n}(s,y_s^{n+1,n},z_s^{n+1,n})\overleftarrow{dB}_{s}\\&&-
\int_{t}^{T}z_s^{n+1,n}dW_{s},
\end{eqnarray*}
where $\displaystyle{g^n(t,y,z)=g(t,y+y_{t}^n,z+z_{t}^n)-g(t,y_{t}^n,z_{t}^n),\,\forall\,n\geq0,\;  \phi_s^0=f(s,y_{s}^{0},z_{s}^{0})+\kappa|y_{s}^{0}|+\kappa\|z_{s}^{0}\|+\varphi_s}$ and $\displaystyle{\phi_s^n=f(s,y_{s}^{n},z_{s}^{n})-f(s,y_{s}^{n-1},z_{s}^{n-1})-h(y_s^n-y_s^{n-1},z_s^n-z_s^{n-1}),\;\; n\geq1}$. According to it definition, one can show that $\phi^0$ and $g^n, \;\forall\ n\geq 0$ satisfy all assumptions of Lemma \ref{l1}.
Moreover, since $k_t^{n}$ is a continuous and increasing process, for all $n\geq 0,\; k^{n+1,n}$ is a continuous processes of finite variation and, using the same argument as one appear in \cite{Bah}, on can show that
\begin{eqnarray*}
\int_{0}^{T}(y_t^{n+1,n})^{-}dk_t^{n+1,n}=
\int_{0}^{T}(y_t^{n+1}-
y_t^{n})^{-}dk_t^{n+1}\geq0.
\end{eqnarray*}
Therefore, it follows from Lemma \ref{l1} that $y_t^{1,0}\geq 0$ a.s., i.e. $y_t^{0}\leq y_t^{1}$, a.s.for all $t\in [0,T]$.
Let us suppose that there exists $n\geq 1$ such that  $y_t^{n-1}\leq y_t^{n}$. Then, for such $n,\,\phi^n$ satisfies assumption of Lemma \ref{l1} from which, we obtain $y_t^{n}\leq y_t^{n+1}$, a.s., for all $t\in [0,T]$. Finally, for all $n\geq 0$, $y_t^{n}\leq y_t^{n+1}$ a.s. for all $t\in [0,T]$.

Setting $(\tilde{y}_t^{0,n},\tilde{z}_t^{0,n},\tilde{k}_t^{0,n})=(\tilde{y}_t^{0}-y_t^{n},\tilde{z}_t^{0}-z_t^{n},\tilde{k}_t^{0}-k_t^{n})$, we check similarly as above that for all $n\geq 0$, $\tilde{y}_t^{0,n}\geq 0$  a.s., for all $t\in[0,T]$, i.e. for all $n\geq 0, \; y_t^{n}\leq \tilde{y}^0_t$,  a.s., for all $t\in[0,T]$.
Thus, we have for all $n\geq 0$, $$y_t^{n}\leq y_t^{n+1}\leq \tilde{y}_t^{0},\ \ \ \P\mbox{-a.s.} \ \  \forall\ t\in [0,T].
$$
Moreover, since $|y_t^{n}|\leq \max(|\tilde{y}_t^{0}|,|y_t^{0}|),\; \forall\, t\in[0,T]$ we have
\begin{eqnarray}\label{A0} \underset{n}{\sup}\
\E\Big(\underset{0\leq t\leq
T}{\sup}|y_t^n|^2\Big)\leq\max\left(\E\Big(\underset{0\leq
t\leq T}{\sup}|\tilde{y}_t^0|^2\Big),\E\Big(\underset{0\leq
t\leq T}{\sup}|y_t^0|^2\Big)\right)<+\infty.
\end{eqnarray}
Therefore, we deduce from the dominated convergence theorem that
$(y_s^{n})_{n\geq0}$ converges in $\mathcal{S}^{2}([0,T],\mathbb{R})$ to a limit $\underline{y}$.

On the other hand, by virtue of Itô's formula, we have
\begin{eqnarray}
\E\left(|y_0^{n+1}|^2+\int_{0}^{T}\|z_s^{n+1}\|^2ds\right)&=&\E|\xi|^2
+2\E\int_{0}^{T}y_s^{n+1}\left(f(s,y_s^{n},z_s^{n})+
h(y_s^{n+1}-y_s^{n},z_s^{n+1}-z_s^{n})\right)ds
\notag\\&&+2\E\int_{0}^{T}y_s^{n+1}dk_s^{n+1}+\E\int_{0
}^{T}\|g(s,y_s^{n+1},z_s^{n+1})\|^2ds.\label{A}
\end{eqnarray}
From (H2), (H4), (H7) and Young inequalities, we get for any $\gamma, \sigma>0$,
\begin{eqnarray*}\label{}
y_s^{n+1}\left(f(s,y_s^{n-1},z_s^{n-1})+h(y_s^{n}-y_s^{n-1},z_s^{n}-z_s^{n-1})\right)&\leq&|y_s^{n+1}|\varphi_s+
\kappa|y_s^{n+1}|\left(
2|y_s^{n}|+2\|z_s^{n}\|+|y_s^{n+1}|
+\|z_s^{n+1}\|\right)\\
&\leq&\left(\frac{1}{2}+\kappa^2+
\frac{2\kappa^2}{\gamma}+\kappa+\frac{\kappa^2}{2\sigma}\right)
|y_s^{n+1}|^2+|y_s^{n}|^2+\frac{\gamma}{2}\|z_s^{n}\|^2\\
&&+\frac{\sigma}{2}\|z_s^{n+1}\|^2+\frac{1}{2}|\varphi_s|^2,\\
\|g(s,y_s^{n+1},z_s^{n+1})\|^2&\leq&
C|y_s^{n+1}|^2 +\alpha\|z_s^{n+1}\|^2.
\end{eqnarray*}
Using again Young inequality, we have for any $\beta>0$,
\begin{eqnarray*}
2\E\int_{0}^{T}y_s^{n+1}dk_s^{n+1}
=2\int_{0}^{T}S_sdk_s^{n+1}
\leq\frac{1}{\beta}\E\left(\underset{0\leq t\leq
T}{\sup}|S_s|^2\right)+\beta\E\left(k_T^{n+1}\right)^2
\end{eqnarray*}
Therefore, there exists a constant $C_1$ independent of $n$ such that
for any \ $\gamma, \sigma>0$, we derive
\begin{equation}\label{B}
\E\int_{0}^{T}\|z_s^{n+1}\|^2ds\leq C_1
+(\sigma+\alpha)\E\int_{0}^{T}\|z_s^{n+1}\|^2ds
+\gamma\E\int_{0 }^{T}\|z_s^{n}\|^2ds+\beta\E|k_T^{n+1}|^2.
\end{equation}

Moreover, since
\begin{eqnarray*}\notag
k_T^{n+1}&=&y_0^{n+1}-\xi-
\int_{0}^{T}\left(f(s,y_s^{n},z_s^{n})+
h(y_s^{n+1}-y_s^{n},z_s^{n+1}-z_s^{n})\right)ds
-\int_{0}^{T}g(s,y_s^{n+1},z_s^{n+1})\overleftarrow{dB}_{s}
\\&&\hspace{1cm}+\int_{0}^{T}z_s^{n+1}dW_{s},\ \ t \in [0,T],
\end{eqnarray*}
it follows from Hölder and BDG's inequalities and the properties on $f$, $h$ and $g$ that there exists $C_2$ independent of $n$ such that
\begin{eqnarray}\label{d11}
\E\left(k_T^{n+1}\right)^2&\leq&C_2+c\E\int_{0}^{T}\left(\|z_s^{n}\|^2
+\|z_s^{n+1}\|^2\right)ds\hbox{.}
\end{eqnarray}
According to \eqref{B} and \eqref{d11} and choosing  $\sigma>0$ and $\beta>0$ such that
$0<\sigma+\beta c<1-\alpha$, we derive for any $\gamma>0$, $n\geq 0$
\begin{eqnarray*}\label{R}
\E\int_{0}^{T}\|z_s^{n+1}\|^2ds\leq
\frac{\Lambda}{1-\alpha-\sigma-\beta c} +\frac{\gamma+\beta c}{1-\alpha-\sigma-\beta c}\E\int_{0
}^{T}|z_s^{n}|^2ds,
\end{eqnarray*}
which provide by iteration
\begin{eqnarray}
\E\int_{0}^{T}\|z_s^{n+1}\|^2ds&\leq&
\frac{\Lambda}{1-\alpha-\sigma-\beta c}\sum_{i=0}^{n-1}\left(\frac{\gamma+\beta c}{1-\alpha-\sigma-\beta c}\right)^i\notag\\
&&+\left(\frac{\gamma+\beta c}{1-\alpha-\sigma-\beta c}\right)^n\E\int_{0 }^{T}\|z_s^{0}\|^2ds.\label{R}
\end{eqnarray}
Choosing $\gamma>0$ (for example, one can take: $0<\sigma<\displaystyle\frac{1-\alpha}{2}$, $0<\beta<\displaystyle\frac{1-\alpha}{4c}$ and $0<\gamma<1-\alpha-\sigma-2\beta c$ ) such that
$0<\displaystyle\frac{\gamma+\beta c}{1-\alpha-\sigma-\beta c}<1$ and noting that
\ $\displaystyle\E\int_{0 }^{T}\|z_s^{0}\|^2ds<\infty$, we obtain
\begin{eqnarray}
\underset{n\geq0}{\sup}\
\E\int_{0}^{T}\|z_s^{n}\|^2ds<+\infty.\label{C}
\end{eqnarray}
Denoting $\theta^n_s=f(s,y_s^{n-1},z_s^{n-1})+h(y_s^{n}-y_s^{n-1},z_s^{n}-z_s^{n-1})$, it follows from \eqref{A0} and \eqref{C} that
$\theta^n_s$ is uniformly bounded in $\mathcal{M}^{2}(0,T,\mathbb{R})$.

Applying again Itô's formula to $|y_{t}^{p}-y_{t}^{n}|^2$, we have
\begin{eqnarray*}
\E|y_{t}^{p}- y_{t}^n |^{2}+\E\int_{t}^{T}\|z_{s}^{p}-z_{s}^n\|^{2}ds &
=&2\E\int_{t}^{T}( y_{s}^{p}-y_{s}^n )(\theta_s^p-\theta_s^n)ds+2\E\int_{t}^{T}( y_{s}^{p}-y_{s}^n )(dk_{s}^{p}-dk_{s}^{n})\\&&
+\E\int_{t}^{T}\|g(s,y_{s}^{p},z_{s}^{p}-)
g(s,y_{s}^{n},z_{s}^{n})\|^{2}ds.\nonumber
\end{eqnarray*}
Using the fact that $y_{t}^n\geq S_{t}$ for all\ $ t \in [0,T]$ and the identity $\displaystyle \int_0^T(y_{s}^n-S_{s})dk_{s}^n=0$, we obtain
\begin{eqnarray*}
\E\int_{0}^{T}\|z_{s}^{p}-z_{s}^n\|^{2}ds &
\leq&2\E\int_{0}^{T}( y_{s}^{p}-y_{s}^n )(\theta_s^p-\theta_s^n)ds+\E\int_{0}^{T}\|g(s,y_{s}^{p},z_{s}^{p})
-g(s,y_{s}^{n},z_{s}^{n})\|^{2}ds.\nonumber
\end{eqnarray*}
Therefore, by virtue of Hölder's inequality and (H7), we obtain
\begin{eqnarray*}
(1-\alpha)\E\int_{0}^{T}\|z_{s}^{p}-z_{s}^n\|^{2}ds &\leq& 4\left(\underset{n\geq0}{\sup}\|\theta^n\|_{\mathcal{M}^{2}}\right)\left(\E\int_{0}^{T}| y_{s}^{p}-y_{s}^n |^2ds\right)^{\frac{1}{2}}
+C\E\int_{0}^{T}|y_{s}^{p}-y_{s}^{n}|^{2}ds,\nonumber
\end{eqnarray*}
which yields that $\left(z^{n}\right)_{n\geq0}$ is a Cauchy
sequence in $\mathcal{M}^{2}(0,T,\mathbb{R}^{ d})$ so that it  converges in
$\mathcal{M}^{2}(0,T,\mathbb{R}^{ d})$ to a limit $\underline{z}$. On the other hand, since $(y^n,z^n)\rightarrow (\underline{y},\underline{z})$ in $\mathcal{M}^2(\R^d)\times\mathcal{S}(\R)$, then there exists $(y',z')\in M^2(\R^d)\times\mathcal{S}(\R)$ and a subsequence
which we still denote $(y^n,z^n)$ such that $\forall n, |y^n|<y',\, \|z^n\|<z'$ and $(y^n,z^n) \rightarrow (\underline{y},\underline{z}), dt\times d\P$ a.e.
Therefore, from the properties of $f$, $g$ and $h$, we get for almost all $\omega$,
\begin{eqnarray*}f(t,y_t^{n-1},z_t^{n-1})+
h(y_t^{n}-y_t^{n-1},z_t^{n}-z_t^{n-1})
\longrightarrow f(t,\underline{y}_{t},\underline{z}_{t}),
\end{eqnarray*}
$\P$-a.s., for all $t\in[0,T]$ as $n\rightarrow \infty$.
Then, it follows by the dominated convergence theorem that
\begin{eqnarray*}\E\int_{t}^{T}|f(s,y_s^{n-1},z_s^{n-1})+
h(y_s^{n}-y_s^{n-1},z_s^{n}-z_s^{n-1})
- f(s,\underline{y}_{s},\underline{z}_{s})|^2ds\rightarrow 0
\end{eqnarray*}
as $n \rightarrow\infty$.
On the other hand, by Burkhölder-Davis Gundy inequality,
\begin{eqnarray*}
&&\E\underset{0\leq t\leq T}\sup\left|\int_{t}^{T}g(s,y_s^{n},z_s^{n})dB_s- \int_{t}^{T}g(s,\underline{y}_{s},\underline{z}_{s})dB_s \right|^2\\&&\leq C\E\int_{0}^{T}\left|y_{s}^{n}- \underline{y}_{s}\right|^2ds+\alpha\E\int_{0}^{T}\|z_{s}^{n}- \underline{z}_{s}\|^2ds\underset{n \longrightarrow
\infty}{\longrightarrow} 0,\end{eqnarray*}
and
\begin{eqnarray*}\E\underset{0\leq t\leq T}\sup\left|\int_{t}^{T}z_{s}^{n}dW_s- \int_{t}^{T}\underline{z}_{s}dW_s \right|^2\leq\E\int_{0}^{T}\left|z_{s}^{n}- \underline{z}_{s}\right|^2ds\underset{n \longrightarrow
\infty}{\longrightarrow} 0.
\end{eqnarray*}
Since, $(y^n,z^n, \theta^n)$ converges in $\mathcal{S}^{2}([0,T];\mathbb{R})\times
\mathcal{M}^{2}([0,T];\mathbb{R}^{ d})\times
\mathcal{M}^{2}([0,T];\mathbb{R}^{ d})$
and
\begin{eqnarray*}
\E\underset{0\leq t\leq T}\sup\left|k_{t}^p-k_{t}^n\right|^2
&\leq&\E|y_{0}^p-y_{0}^n|^2+\E\underset{0\leq t\leq T}\sup\left|y_{t}^p-y_{t}^n\right|^2+
\E\displaystyle\int_{0}^{T}|\theta_s^p-\theta_s^n|^2ds\\
&&+\E\underset{0\leq t\leq T}\sup\left|\int_{0}^{t}\left(g(s,y_{s}^p,z_{s}^p)
-g(s,y_{s}^n,z_{s}^n)\right)
\overleftarrow{dB}_{s}\right|^2+\E\underset{0\leq t\leq T}\sup\left|\int_{0}^{t}(z_{s}^p-z_{s}^n)dW_{s}\right|^2,
\end{eqnarray*}
for any $n,p\geq0$, we deduce from Burkhölder-Davis Gundy inequality that
\begin{eqnarray*}
\E\left(\underset{0\leq t\leq T}\sup\left|k_{t}^p-k_{t}^n\right|^2\right)\rightarrow 0,
\end{eqnarray*}
as $n,\, p \rightarrow\infty$. Consequently, there exists a $\mathcal{F}_t$-mesurable process $k$ with value in $\R$ such that
\begin{eqnarray*}
\E\left(\underset{0\leq t\leq T}\sup\left|k_{t}^n-\underline{k}_{t}\right|^2\right)\longrightarrow 0,
\end{eqnarray*}
as $n \rightarrow\infty$. Obviously, $\underline{k}_{0}=0$ and $\{\underline{k}_{t};\ 0\leq t \leq T\}$ is a non-decreasing and continuous process.
From \eqref{11}, we have for all $n\geq0$,  $y_{t}^n\geq S_{t},\ \ \forall\ t \in [0,T] $, then $\underline{y}_{t}\geq S_{t},\ \ \forall\ t \in [0,T].$

On the other hand, from the result of Saisho \cite{Saisho} (1987, p. 465), we have
\begin{eqnarray*}
\int_0^T(y_{s}^n-S_{s})dk_{s}^n\ \rightarrow\int_0^T(\underline{y}_{s}-S_{s})dk_{s}
\end{eqnarray*}
$\P$-a.s. as $n\rightarrow\infty$. Using the identity $\displaystyle\int_0^T(y_{s}^n-S_{s})dk_{s}^n=0$, for all $n\geq 0$, we obtain\newline $\displaystyle\int_0^T(\underline{y}_{s}-S_{s})d\underline{k}_{s}=0$. Finally, passing to the limit in \eqref{11}, we get that $(\underline{y},\underline{z}, \underline{k})$ is a solution of the RBDSDE \eqref{1}.

\medskip
Let $(y,z,k)$ be any solution of the RBDSDE \eqref{1}. By virtue of Theorem \ref{o}, we have $y^{n}\leq y$, for all $n\geq0$ and therefore, \ $\underline{y}\leq y$ i.e.,  $\underline{y}$ is the minimal solution.
\end{proof}
\begin{remark}
We can prove the maximal solution result for BDSDEs \eqref{1} when the coefficient $f$ is right-continuous and decreasing.
\end{remark}

\label{lastpage-01}
\end{document}